\documentclass[12pt, reqno]{amsart}

\usepackage{amsmath,amssymb,amsrefs}
\usepackage{hyperref}

\theoremstyle{plain}
\newtheorem{thm}{Theorem}[section]
\newtheorem{prp}[thm]{Proposition}
\newtheorem{lem}[thm]{Lemma}
\newtheorem{cor}[thm]{Corollary}

\theoremstyle{remark}

\newcommand{\C}{\mathbb{C}}
\newcommand{\R}{\mathbb{R}}
\newcommand{\Z}{\mathbb{Z}}
\newcommand{\N}{\mathbb{N}}
\newcommand{\vol}{\operatorname{vol}}
\newcommand{\inrad}{\operatorname{inrad}}

\newcommand{\dist}{\operatorname{dist}}

\newcommand{\beq}{\begin{equation}}
\newcommand{\eeq}{\end{equation}}

\begin{document}

\title[On the inner radius of the nonvanishing set]{On the inner radius of the nonvanishing set for eigenfunctions of complex elliptic operators}

\author{Omer Friedland}
\address{Institut de Math\'ematiques de Jussieu, Sorbonne Universit\'e, 4 place Jussieu, 75005 Paris}
\email{omer.friedland@imj-prg.fr}

\author{Henrik Uebersch\"ar}
\address{Institut de Math\'ematiques de Jussieu, Sorbonne Universit\'e, 4 place Jussieu, 75005 Paris}
\email{henrik.ueberschar@imj-prg.fr}

\date{\today}

\begin{abstract}
Let $\Omega\subset\R^d$ be any open set.
We consider solutions of $H\psi_\lambda=\lambda \psi_\lambda$, $\lambda\in\C$, where $H$ is an $m$th order complex constant-coefficient elliptic partial differential operator. We prove that either the eigenfunctions satisfy a lower bound on the inner radius of the complement of the zero set of $\psi_\lambda$ in $\Omega$ of order $|\lambda|^{-1/m}$, or 100\% of the $L^2$ mass of $\psi_\lambda$ concentrates in a boundary layer of width $|\lambda|^{-1/m}$, as $|\lambda|\to+\infty$.
\end{abstract}

\maketitle

\section{Introduction}

\subsection{Motivation and context}

A classical theme in nodal geometry, going back to Courant, is to understand how the zero set of Laplace eigenfunctions partitions a domain or a manifold. Besides counting nodal domains, a central geometric quantity is the \emph{inner radius} of a nodal domain, i.e.\ the radius of the largest Euclidean ball that can be inscribed in the domain.
For Laplace--Beltrami eigenfunctions on closed manifolds, Mangoubi obtained general lower bounds for the inner radius of \emph{each} nodal domain, with the optimal wavelength scale $\lambda^{-1/2}$ in dimension $2$ and polynomially weaker bounds in higher dimensions \cite{Man08a,Man08b}. More recently, using techniques influenced by Logunov--Malinnikova's work, Charron--Mangoubi proved that in dimensions $d\ge 3$ every nodal domain contains a ball of radius $c\,\lambda^{-1/2}(\log\lambda)^{-(d-2)/2}$, centered at a point of maximal amplitude in that nodal domain \cite{ChMan24}.

A complementary theme relates \emph{mass distribution} of an eigenfunction to geometric lower bounds on inscribed balls. In particular, Georgiev \cite{Geo19} discusses how an $L^2$--distribution condition on coverings by ``good cubes'' forces a nodal domain to have large inradius, and Hezari \cite{Hez18} shows that Mangoubi--type inradius bounds improve along quantum ergodic sequences under small--scale equidistribution hypotheses.

The present paper is of a different nature. We work on an arbitrary open set $\Omega\subset\R^d$ (no boundary regularity is assumed) and consider \emph{constant--coefficient elliptic operators} $H$ of order $m$, allowing complex coefficients and complex spectral parameter $\lambda$. For complex-valued solutions the classical decomposition into nodal domains may be degenerate (the complement of the zero set can be connected), so we focus on the inner radius of the \emph{nonvanishing set} $\Sigma_\lambda=\{\psi_\lambda\neq 0\}$.

Our main result is a quantitative inequality relating $\inrad(\Sigma_\lambda)$ to the proportion of $L^2$--mass carried by the interior $\Omega_{-r_\lambda}$ at the scale $r_\lambda\asymp|\lambda|^{-1/m}$.
We also prove a localized form valid on any open subset $A\subset\Omega$.
Finally, we give an application showing that we have a uniform lower bound $\inrad(\Sigma_\lambda)\gtrsim r_\lambda$, unless 100\% of the $L^2$-mass of $\psi_\lambda$ concentrates in the boundary layer $\Omega\setminus\Omega_{-r_\lambda}$, as $|\lambda|\to+\infty$.

\subsection{Setting and notation}

Let $\Omega\subset\R^d$ be open and let
$$
H = \sum_{|\alpha| = m} c_\alpha D^\alpha,\quad c_\alpha\in\C,
$$
be a constant--coefficient differential operator of order $m\in\N$ whose principal part is homogeneous of degree $m$.
We use the standard notation
$$
D^\alpha = (i\partial_{x_1})^{\alpha_1}\cdots(i\partial_{x_d})^{\alpha_d},
\quad |\alpha| = \alpha_1+\cdots+\alpha_d.
$$
Let
$$
P(\xi) := \sum_{|\alpha| = m} c_\alpha \xi^\alpha
$$
be the (principal) symbol.
We assume $H$ is \emph{elliptic}: there exists $c_{\rm ell}>0$ such that
\begin{equation}\label{eq:ellipticity}
|P(\xi)|\ \ge\ c_{\rm ell} |\xi|^m,\quad \xi\in\R^d.
\end{equation}

Let $\lambda\in\C$ with $|\lambda|\ge 1$ and let $\psi_\lambda\in L^2(\Omega)$ be a distributional solution of
\begin{equation}\label{eq:eigen}
H\psi_\lambda = \lambda\psi_\lambda\quad\text{in }\Omega.
\end{equation}
By interior elliptic regularity, $\psi_\lambda$ is smooth in $\Omega$.

Define the nonvanishing set
$$
\Sigma_\lambda := \{x\in\Omega:\psi_\lambda(x)\neq 0\}.
$$
Since $\psi_\lambda$ is continuous, $\Sigma_\lambda$ is open.
We define its inner radius by
$$
\inrad(\Sigma_\lambda) := \sup\{ \rho>0:\exists x\in\Sigma_\lambda\ \text{with}\ B(x,\rho)\subset \Sigma_\lambda \}.
$$

For $r>0$ we define the $r$--interior of $\Omega$ by
$$
\Omega_{-r} := \{x\in\Omega:\ B(x,r)\subset \Omega\}.
$$
For an open set $A\subset\Omega$ we write
$$
A_{-r} := \{x\in A:\ B(x,r)\subset A\}.
$$
Finally, we set
$$
r_\lambda := |\lambda|^{-1/m}.
$$

\subsection{Main results}

Our main theorem is the following quantitative inradius estimate.
We state it in a scale--invariant form (no global normalization is required).

\begin{thm}[Quantitative inradius bound]\label{thm:main}
Let $\Omega\subset\R^d$ be open and let $H$ be a homogeneous constant--coefficient elliptic operator of order $m$ satisfying \eqref{eq:ellipticity}.
Then there exists a constant $c_{d,H}>0$ (depending only on $d$ and $H$) such that for every $|\lambda|\ge 1$ and every nonzero solution $\psi_\lambda\in L^2(\Omega)$ of \eqref{eq:eigen} we have
\begin{equation}\label{eq:main}
\inrad(\Sigma_\lambda)\ \ge\ c_{d,H} r_\lambda 
\frac{\|\psi_\lambda\|_{L^2(\Omega_{-r_\lambda})}}{\|\psi_\lambda\|_{L^2(\Omega)}}.
\end{equation}
\end{thm}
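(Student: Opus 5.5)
The plan is to prove the contrapositive form of \eqref{eq:main} by a pointwise argument. Write $r=r_\lambda$ and $\rho=\inrad(\Sigma_\lambda)$. Since the ratio on the right of \eqref{eq:main} is at most $1$, it suffices to take $c_{d,H}\le 1/4$ and treat the case $\rho<r/4$. In that case I will show that
$$\|\psi_\lambda\|_{L^2(\Omega_{-r})}\le C(\rho/r)\,\|\psi_\lambda\|_{L^2(\Omega)},$$
with $C$ depending only on $d$ and $H$; this gives \eqref{eq:main} with $c_{d,H}=\min(1/4,1/C)$. The mechanism is that if no ball of radius exceeding $\rho$ fits in $\Sigma_\lambda$, every point of $\Omega_{-r}$ lies within about $\rho$ of a zero of $\psi_\lambda$. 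The gradient at the wavelength scale then forces $|\psi_\lambda|$ to be small there.

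\textbf{Step 1 (zeros are $\rho'$-dense in $\Omega_{-r}$).} Fix $\rho'\in(\rho, r/4)$, to be sent to $\rho$ at the end. Take $x\in\Omega_{-r}$ with $\psi_\lambda(x)\neq0$. Then $B(x,\rho')\subset\Omega$ because $\rho'<r$, and $B(x,\rho')\not\subset\Sigma_\lambda$ by the definition of the inner radius. Hence there is $z\in B(x,\rho')$ with $\psi_\lambda(z)=0$. The mean value inequality then gives
$$|\psi_\lambda(x)|\le \rho'\sup_{B(x,r/2)}|\nabla\psi_\lambda|.$$
This inequality holds trivially when $\psi_\lambda(x)=0$.

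\textbf{Step 2 (uniform interior estimate at scale $r$).} I will show that
$$\sup_{B(x,r/2)}|\nabla\psi_\lambda|\le C\,r^{-1-d/2}\|\psi_\lambda\|_{L^2(B(x,r))}\qquad\text{for every }x\in\Omega_{-r}.$$
To do so, rescale by setting $\varphi(y)=\psi_\lambda(x+ry)$. Homogeneity of $H$ gives $H\varphi=\mu\varphi$ on $B(0,1)$, where $\mu=\lambda/|\lambda|$ lies on the unit circle. The operator $H-\mu$ is a constant-coefficient elliptic operator of order $m$ with the same principal symbol $P$. Standard interior elliptic estimates then bound $\|\varphi\|_{H^{k}(B(0,1/2))}$ by $C_k\|\varphi\|_{L^2(B(0,1))}$, and taking $k>d/2+1$ and applying Sobolev embedding yields $\sup_{B(0,1/2)}|\nabla\varphi|\le C\|\varphi\|_{L^2(B(0,1))}$. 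Undoing the scaling produces the powers of $r$ above.

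\textbf{Step 3 (integration).} Combining Steps 1 and 2 and squaring gives
$$|\psi_\lambda(x)|^2\le C(\rho'/r)^2 r^{-d}\int_{B(x,r)}|\psi_\lambda|^2\qquad\text{for }x\in\Omega_{-r}.$$
Integrate over $x\in\Omega_{-r}$ and apply Fubini. Each $y\in\Omega$ lies in $B(x,r)$ for a set of $x$ of measure at most $|B(0,r)|=c_d r^d$. Therefore
$$\|\psi_\lambda\|^2_{L^2(\Omega_{-r})}\le C(\rho'/r)^2\|\psi_\lambda\|^2_{L^2(\Omega)}.$$
Letting $\rho'\downarrow\rho$ gives the claimed bound.

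The main obstacle is Step 2: the constant must be uniform in $\mu\in S^1$. This includes values of $\mu$ in the range of $P$, where $H-\mu$ is not invertible on $\R^d$, so only interior (local) estimates are available. My first approach is the standard local argument. Apply a cutoff $\chi$ and write $(H-\mu)(\chi\varphi)=[H,\chi]\varphi$. Then invert $P(\xi)$ only for large $|\xi|$, using a parametrix $(1-\theta(\xi))/(P(\xi)-\mu)$, whose symbol bounds are uniform for $|\mu|=1$ by ellipticity. Low frequencies are trivially controlled by $L^2$. Iterating over nested balls gains $m$ derivatives at each step. Every constant depends continuously on $\mu$, and $S^1$ is compact, so the constants are uniform in $\mu$.
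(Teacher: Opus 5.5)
Your proof is correct, but it takes a different route from the paper's.

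\textbf{The paper's route.} It argues directly. It covers $\Omega_{-r}$ by balls $B(x_j,r/2)$ whose doubles have overlap at most $N_d=5^d$ (Lemma~\ref{lem:cover}). A pigeonhole argument (Lemma~\ref{lem:good-ball}) then selects one ball $B(x_0,r)\subset\Omega$ whose inner half carries at least the fraction $M/(2N_dN)$ of its mass, where $M,N$ are the squared $L^2$ norms over $\Omega_{-r}$ and $\Omega$. It then rescales, takes a point of near-maximal amplitude in $B(0,1/2)$, and inscribes a nonvanishing ball via the Lipschitz bound.

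\textbf{Your route.} You argue by contraposition. If $\inrad(\Sigma_\lambda)<r/4$, zeros are $\rho'$-dense in $\Omega_{-r}$, so the rescaled gradient bound gives $|\psi_\lambda(x)|\lesssim(\rho'/r)\,r^{-d/2}\|\psi_\lambda\|_{L^2(B(x,r))}$ at every $x\in\Omega_{-r}$. Tonelli then replaces both the covering and the good-ball lemmas, since each $y$ lies in $B(x,r)$ only for $x$ in a set of measure at most $\vol(B(0,r))$. This is shorter, removes the overlap constant, and yields the pointwise bound on all of $\Omega_{-r}$ as a by-product. Read directly rather than contrapositively, your Tonelli step is a continuous version of the paper's discrete selection. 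It produces a point $x\in\Omega_{-r}$ with $|\psi_\lambda(x)|^2\gtrsim (M/N)\,r^{-d}\|\psi_\lambda\|_{L^2(B(x,r))}^2$, where the Lipschitz lemma applies. The paper's version instead keeps the measure-theoretic selection cleanly separate from the PDE input.

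\textbf{The PDE input.} Both proofs need the same thing: a unit-scale interior $C^1$ estimate uniform over $|\mu|=1$. The paper imports it from Theorem~\ref{thm:FU24} and then needs Lemma~\ref{lem:Nlambda-compact} to control the lattice count $\mathcal N_\mu$. Your cutoff/parametrix argument, with symbol $(1-\theta(\xi))/(P(\xi)-\mu)$, is self-contained, and its uniformity in $\mu$ follows directly from ellipticity.

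\textbf{One small correction.} Since $[H,\chi]$ has order $m-1$, each iteration over nested balls improves the Sobolev regularity of $\varphi$ by one derivative, not $m$. This is harmless: finitely many iterations still reach $H^k$ with $k>d/2+1$.
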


The following localized version is obtained by applying Theorem~\ref{thm:main} on subsets.

\begin{thm}[Localized inradius bound]\label{thm:local}
Let $\Omega$, $H$ and $\lambda$ be as in Theorem~\ref{thm:main}.
Let $A\subset\Omega$ be open and assume $\|\psi_\lambda\|_{L^2(A)}>0$.
Then
\begin{equation}\label{eq:local}
\inrad(\Sigma_\lambda\cap A)\ \ge\ c_{d,H} r_\lambda 
\frac{\|\psi_\lambda\|_{L^2(A_{-r_\lambda})}}{\|\psi_\lambda\|_{L^2(A)}}.
\end{equation}
\end{thm}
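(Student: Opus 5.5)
Theorem~\ref{thm:local} follows by applying Theorem~\ref{thm:main} with $\Omega$ replaced by the open set $A$. The hypotheses of Theorem~\ref{thm:main} concern only an open set, the operator $H$, and an $L^2$ solution on that set. We therefore check three things in turn.

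First, the restriction $\psi_\lambda|_A$ belongs to $L^2(A)$, since $\|\psi_\lambda\|_{L^2(A)}\le\|\psi_\lambda\|_{L^2(\Omega)}<\infty$. It solves $H\psi_\lambda=\lambda\psi_\lambda$ in $\mathcal D'(A)$, because test functions in $C_c^\infty(A)$ are also test functions in $C_c^\infty(\Omega)$. It is nonzero as an element of $L^2(A)$ by the assumption $\|\psi_\lambda\|_{L^2(A)}>0$.

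Second, we identify the objects that Theorem~\ref{thm:main} produces for the pair $(A,\psi_\lambda|_A)$:
\begin{itemize}
\item its nonvanishing set is $\{x\in A:\psi_\lambda(x)\neq0\}=\Sigma_\lambda\cap A$;
\item its $r_\lambda$-interior is exactly $A_{-r_\lambda}$, by the definition given in the setup;
\item the constant $c_{d,H}$ depends only on $d$ and $H$, not on the open set, so the same constant applies to $A$.
\end{itemize}
Substituting these into \eqref{eq:main} gives \eqref{eq:local} directly.

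The only point needing care is this uniformity of $c_{d,H}$ over all open sets. Theorem~\ref{thm:main} is stated for an arbitrary open $\Omega$ with a constant independent of $\Omega$, and the inradius is defined intrinsically in terms of balls contained in the set. No boundary regularity is needed anywhere, so the reduction goes through for any open $A\subset\Omega$.
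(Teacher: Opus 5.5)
Your proposal is correct and follows the paper's own proof: restrict $\psi_\lambda$ to $A$, note that the equation holds distributionally in $A$, identify the nonvanishing set as $\Sigma_\lambda\cap A$ and the $r_\lambda$-interior as $A_{-r_\lambda}$, and invoke Theorem~\ref{thm:main} with its constant $c_{d,H}$, which does not depend on the open set. Your explicit checks that $\psi_\lambda|_A$ is a nonzero element of $L^2(A)$ and that $c_{d,H}$ is uniform over open sets are exactly the points the paper leaves implicit.
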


A simple consequence of Theorem~\ref{thm:main}, which is the subject of the following corollary, is that we either have a uniform lower bound $\inrad(\Sigma_{\lambda})\gtrsim r_\lambda$, as $|\lambda|\to+\infty$, or 100\% of the $L^2$-mass of $\psi_\lambda$ concentrates in the boundary layer $\Omega\setminus\Omega_{-r_\lambda}$.
\begin{cor}[Boundary layer concentration]\label{cor:boundary-layer}
Let $(\lambda_j)_{j\in\N}\subset\C$ with $|\lambda_j|\to\infty$ and let $\psi_{\lambda_j}\in L^2(\Omega)$ solve \eqref{eq:eigen} with $\|\psi_{\lambda_j}\|_{L^2(\Omega)} = 1$.
If
$$
\inrad(\Sigma_{\lambda_j}) = o(r_{\lambda_j}),\quad \text{as}\;j\to\infty,
$$
then
$$
\|\psi_{\lambda_j}\|_{L^2(\Omega_{-r_{\lambda_j}})}\ \longrightarrow\ 0,
$$
or, equivalently, 100\% of the $L^2$-mass concentrates in the boundary layer
$\Omega\setminus\Omega_{-r_{\lambda_j}}$ of thickness $\asymp r_{\lambda_j}$.
\end{cor}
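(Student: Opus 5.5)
This follows directly from Theorem~\ref{thm:main}, so I will argue by contraposition at the level of the inequality \eqref{eq:main}. Fix the sequence $(\lambda_j)$ with $|\lambda_j|\to\infty$; discarding finitely many terms, I may assume $|\lambda_j|\ge 1$ for all $j$. Each $\psi_{\lambda_j}$ is nonzero because $\|\psi_{\lambda_j}\|_{L^2(\Omega)}=1$, so Theorem~\ref{thm:main} applies to every term. With this normalization, \eqref{eq:main} reads
$$
\|\psi_{\lambda_j}\|_{L^2(\Omega_{-r_{\lambda_j}})}\ \le\ \frac{1}{c_{d,H}}\,\frac{\inrad(\Sigma_{\lambda_j})}{r_{\lambda_j}} .
$$
The constant $c_{d,H}$ is independent of $j$, since it depends only on $d$ and $H$. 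By hypothesis the right-hand side tends to $0$, which gives $\|\psi_{\lambda_j}\|_{L^2(\Omega_{-r_{\lambda_j}})}\to 0$.

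For the equivalent formulation, I note that $\Omega_{-r}\subset\Omega$ for every $r>0$. Hence the mass splits as
$$
\|\psi_{\lambda_j}\|^2_{L^2(\Omega\setminus\Omega_{-r_{\lambda_j}})} = 1-\|\psi_{\lambda_j}\|^2_{L^2(\Omega_{-r_{\lambda_j}})},
$$
and the left-hand side tends to $1$ exactly when the interior mass tends to $0$. The set $\Omega\setminus\Omega_{-r}$ consists of the points whose distance to $\partial\Omega$ (or to $\R^d\setminus\Omega$) is less than $r$. It is therefore a boundary layer of thickness $r_{\lambda_j}\asymp|\lambda_j|^{-1/m}$.

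There is no real obstacle here. The only points to check are that $\inrad(\Sigma_{\lambda_j})$ is finite, so that the inequality is meaningful; that the uniformity of $c_{d,H}$ in $\lambda$ and $\psi$ is exactly what Theorem~\ref{thm:main} provides; and that the requirement $|\lambda|\ge 1$ holds for all large $j$. If $\Sigma_{\lambda_j}$ happened to have infinite inradius, the hypothesis $o(r_{\lambda_j})$ would already fail for that $j$, so this case does not arise for large $j$.
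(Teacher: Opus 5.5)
Your proposal is correct and is the paper's argument: with the normalization, rearrange \eqref{eq:main} to bound the interior mass by $\inrad(\Sigma_{\lambda_j})/(c_{d,H}\, r_{\lambda_j})$, and use that $c_{d,H}$ does not depend on $j$. Your extra checks (discarding the finitely many $j$ with $|\lambda_j|<1$, and the mass splitting that justifies the ``equivalently'' formulation) fill in details the paper leaves implicit.
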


\section{Two elementary geometric lemmas}

We record two simple lemmas that convert Lipschitz control and $L^2$--mass into an inscribed nonvanishing ball.

\subsection{A nonvanishing ball from Lipschitz control}

\begin{lem}[Lipschitz nonvanishing ball]\label{lem:lipschitz-ball}
Let $\Omega\subset\R^d$ be open and let $G:\Omega\to\C$ be Lipschitz on $\Omega$ with Lipschitz constant $L$, i.e.
$|G(x)-G(y)|\le L|x-y|$ for all $x,y\in\Omega$.
Fix $x_0\in\Omega$ and assume $|G(x_0)|\ge \eta>0$.
Set
$$
\rho := \min\Bigl\{\frac{\eta}{2L},\ \dist(x_0,\Omega^c)\Bigr\}.
$$
Then
$$
B(x_0,\rho)\ \subset\ \{x\in\Omega:\ G(x)\neq 0\}.
$$
\end{lem}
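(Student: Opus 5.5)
The plan is a direct triangle-inequality argument. Assume first $L>0$ and $\dist(x_0,\Omega^c)>0$; the latter holds because $x_0\in\Omega$ and $\Omega$ is open. If $\Omega=\R^d$ we read $\dist(x_0,\Omega^c)=+\infty$. If $L=0$ we read $\eta/(2L)=+\infty$, so that $\rho=\dist(x_0,\Omega^c)$ and $G$ is constant on $\Omega$; in that case the conclusion is immediate. With these conventions $\rho>0$.

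First I check that $B(x_0,\rho)\subset\Omega$. This follows because $\rho\le\dist(x_0,\Omega^c)$: any $x$ with $|x-x_0|<\rho$ satisfies $|x-x_0|<\dist(x_0,\Omega^c)$, hence $x\notin\Omega^c$. Consequently $G(x)$ is defined on the whole ball, and the Lipschitz bound applies to the pair $x,x_0$.

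Next, for $x\in B(x_0,\rho)$ the reverse triangle inequality together with the Lipschitz condition gives
\begin{equation*}
|G(x)|\ \ge\ |G(x_0)|-|G(x)-G(x_0)|\ \ge\ \eta-L|x-x_0|\ >\ \eta-L\rho\ \ge\ \eta-\tfrac{\eta}{2}\ =\ \tfrac{\eta}{2}\ >\ 0 .
\end{equation*}
The strict inequality uses $L>0$ and $|x-x_0|<\rho$. When $L=0$ we simply get $|G(x)|=|G(x_0)|\ge\eta$. In either case $G(x)\neq0$, which proves the inclusion.

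There is no real obstacle here. The only points needing care are the degenerate cases $L=0$ and $\Omega^c=\emptyset$, which are handled by the conventions above, and the check that the ball lies inside the domain of $G$. The argument actually yields the slightly stronger quantitative statement $|G|\ge\eta/2$ on $B(x_0,\rho)$, which may be useful later when the lemma is combined with an $L^2$-mass lower bound.
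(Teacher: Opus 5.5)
Your proposal is correct and follows essentially the same argument as the paper: $\rho\le\dist(x_0,\Omega^c)$ gives $B(x_0,\rho)\subset\Omega$, and the reverse triangle inequality with the Lipschitz bound gives $|G|\ge\eta/2>0$ on the ball. Your explicit handling of the degenerate cases $L=0$ and $\Omega=\R^d$ is a harmless addition that the paper leaves implicit.
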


\begin{proof}
By definition of $\rho$, we have $B(x_0,\rho)\subset\Omega$.
For $x\in B(x_0,\rho)$ we estimate
$$
|G(x)|\ \ge\ |G(x_0)|-|G(x)-G(x_0)|
\ \ge\ \eta-L|x-x_0|
\ \ge\ \eta-L\rho
\ \ge\ \eta-\frac{\eta}{2}\ = \ \frac{\eta}{2}>0.
$$
\end{proof}

\subsection{Finding a point of large amplitude from an $L^2$ lower bound}

\begin{lem}[Pointwise lower bound from $L^2$ mass]\label{lem:L2-to-Linfty}
Let $u\in L^2(B(0,R))$. Then
$$
\sup_{B(0,R)} |u|\ \ge\ \frac{\|u\|_{L^2(B(0,R))}}{\vol(B(0,R))^{1/2}}.
$$
\end{lem}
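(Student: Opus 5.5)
The plan is to argue by contradiction against the trivial pointwise estimate $\int |u|^2 \le (\sup|u|)^2\,\vol$, since the statement is essentially the $L^2$--$L^\infty$ comparison on a set of finite measure. Write $B=B(0,R)$ and $S:=\sup_{B}|u|\in[0,\infty]$, where the supremum is understood as the essential supremum since $u$ is only an $L^2$ function. If $S=\infty$ there is nothing to prove, so I assume $S<\infty$. Then $|u(x)|\le S$ for almost every $x\in B$.

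Integrating $|u|^2\le S^2$ over $B$ gives
\[
\|u\|_{L^2(B)}^2=\int_B |u|^2\,dx\ \le\ S^2\,\vol(B).
\]
Taking square roots and dividing by $\vol(B)^{1/2}>0$ yields the claim. For the continuous functions to which the lemma will be applied (the smooth $\psi_\lambda$, after translation to a ball around a chosen point), the essential supremum coincides with the pointwise supremum, so the statement holds in either reading.

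There is no real obstacle here. The only point needing care is interpreting $\sup$ as an essential supremum for general $L^2$ functions, or noting continuity in the intended application. In the later use, one also wants a point $x_0$ with $|u(x_0)|$ close to this supremum. For continuous $u$ this follows directly from the definition of the supremum: for any $\varepsilon>0$ there is $x_0\in B$ with $|u(x_0)|\ge(1-\varepsilon)\|u\|_{L^2(B)}\vol(B)^{-1/2}$. That point then feeds into Lemma~\ref{lem:lipschitz-ball}.
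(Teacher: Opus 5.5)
Your argument is correct and is the paper's proof: bound $|u|^2\le(\sup_B|u|)^2$ almost everywhere on $B(0,R)$, integrate, and take square roots. The opening ``by contradiction'' framing is never used, and reading $\sup$ as an essential supremum gives, if anything, a slightly stronger statement. For the later application you also do not need the $(1-\varepsilon)$ loss, because $\{|u|^2\ge \|u\|_{L^2(B)}^2/\vol(B)\}$ must have positive measure, so a point attaining the bound exactly exists.
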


\begin{proof}
Since $|u|^2\le \big(\sup_{B(0,R)}|u|\big)^2$ almost everywhere on $B(0,R)$,
$$
\|u\|_{L^2(B(0,R))}^2 = \int_{B(0,R)}|u|^2
\le \vol(B(0,R)) \Big(\sup_{B(0,R)}|u|\Big)^2.
$$
Taking square roots yields the claim.
\end{proof}

\section{A uniform local Lipschitz bound at bounded spectral parameter}

In the proof of Theorem~\ref{thm:main} we need a uniform interior $C^1$ bound for solutions of
$Hu = \mu u$ in a fixed ball, with $\mu$ ranging over a compact set.
We obtain this by invoking a local derivative estimate from \cite{FU24}.
Throughout this section we fix once and for all
\begin{equation}\label{eq:delta-fixed}
\delta := \frac12.
\end{equation}

\subsection{A local derivative estimate from \cite{FU24}}

\begin{thm}[Local derivative bound {\cite[Theorem~1.1]{FU24}}]\label{thm:FU24}
Let $H = \sum_{|\alpha|\le m}c_\alpha D^\alpha$ be a constant--coefficient elliptic operator of order $m$ on $\R^d$, with symbol $P(\xi) = \sum_{|\alpha|\le m}c_\alpha\xi^\alpha$.
Fix $\delta$ by \eqref{eq:delta-fixed}.
Let $x\in\Omega$, $r\in(0,1)$, and assume $B(x,r)\subset\Omega$.
If $\psi$ solves $H\psi = \lambda\psi$ in $\Omega$ with $|\lambda|\ge 1$, then for every multi-index $\gamma$,
$$
|D^\gamma \psi(x)|
\ \le\ C_{d,r,\gamma,H} |\lambda|^{|\gamma|/m}\Bigl(1+\mathcal N_\lambda^{1/2}\Bigr) 
\|\psi\|_{L^2(B(x,r))},
$$
where
$$
\mathcal N_\lambda := \#\Bigl\{\xi\in\Z^d:\ |P(\xi)-\lambda|\le |\xi|^{m-1+\delta}\Bigr\}.
$$
\end{thm}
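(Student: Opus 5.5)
The plan is to \emph{periodize a localized copy of $\psi$} and to read off $D^\gamma\psi(x)$ from a Fourier series on the torus $\mathbb T^d:=\R^d/2\pi\Z^d$, whose dual lattice is exactly $\Z^d$. This is where the lattice count $\mathcal N_\lambda$ enters. I should say up front that I have not found a way to close the estimate in the non-resonant region without losing a factor of $|\lambda|$; this gap is described in the last paragraph.

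\emph{Localization.} Since $r<1<\pi$, a translate of $B(x,r)$ embeds isometrically in $\mathbb T^d$. Choose $\chi\in C_c^\infty(B(x,r))$ with $\chi\equiv 1$ on $B(x,r/2)$, and let $u=\chi\psi$, viewed as a smooth function on $\mathbb T^d$. Then $(H-\lambda)u=f:=[H,\chi]\psi$, which is supported in the annulus $B(x,r)\setminus B(x,r/2)$ and involves only derivatives $D^\beta\psi$ with $|\beta|\le m-1$. The coefficients involve derivatives of $\chi$, so they depend on $r$ and are harmless. On the Fourier side this reads $(P(\xi)-\lambda)\hat u(\xi)=\hat f(\xi)$ for $\xi\in\Z^d$. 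Since $u\equiv\psi$ near $x$,
$$
D^\gamma\psi(x)=\sum_{\xi\in\Z^d}\xi^\gamma\hat u(\xi)e^{i\xi\cdot x}.
$$

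\emph{Preliminary $L^2$ bounds.} Before splitting I need $\lambda$-explicit interior estimates. The target is
$$
\|D^\beta\psi\|_{L^2(B(x,3r/4))}\le C_{d,r,\beta,H}|\lambda|^{|\beta|/m}\|\psi\|_{L^2(B(x,r))}.
$$
I would get this by rescaling $y=x+|\lambda|^{-1/m}z$. This turns the equation into $H_0 v-(\lambda/|\lambda|)v=\text{(lower order terms with bounded coefficients)}$, to which the standard G\aa rding/Caccioppoli estimate on unit balls applies, followed by a covering argument. Consequently $\|f\|_{H^k}\le C|\lambda|^{(m-1+k)/m}\|\psi\|_{L^2(B(x,r))}$ for every $k$, and $\|u\|_{L^2}\le\|\psi\|_{L^2(B(x,r))}$.

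\emph{Splitting the sum.} Split $\Z^d$ into the resonant set $S=\{|P(\xi)-\lambda|\le|\xi|^{m-1+\delta}\}$ and its complement.
\begin{itemize}
\item On $S$, ellipticity \eqref{eq:ellipticity} forces $|\xi|\asymp|\lambda|^{1/m}$ for $|\lambda|$ large, because $|\xi|^{m-1+\delta}=o(|\xi|^m)$. Cauchy--Schwarz then gives
$$
\sum_S|\xi^\gamma\hat u(\xi)|\le C|\lambda|^{|\gamma|/m}\mathcal N_\lambda^{1/2}\|u\|_{\ell^2},
$$
which is the $\mathcal N_\lambda^{1/2}$ term.
\item Off $S$, write $\hat u=\hat f/(P-\lambda)$. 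In the high-frequency range $|\xi|\ge C_0|\lambda|^{1/m}$ we have $|P(\xi)-\lambda|\gtrsim|\xi|^m$. There I would trade $\hat f(\xi)$ for $|\xi|^{-k}\widehat{\partial^k f}(\xi)$ with $k$ large, so that Cauchy--Schwarz converges. That piece is $\lesssim|\lambda|^{|\gamma|/m}\|\psi\|$ up to a factor $|\lambda|^{d/(2m)}$, whose removal I have not verified.
\end{itemize}

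\emph{Main obstacle.} The hardest step is the remaining low-frequency non-resonant region $|\xi|\lesssim|\lambda|^{1/m}$, where we only know $|P-\lambda|>|\xi|^{m-1+\delta}$. A naive Cauchy--Schwarz there loses a factor $|\lambda|^{d/(2m)}$ from the number of lattice points in the ball. My first attempt would be to avoid summing absolute values at all: treat this part as a Fourier multiplier and estimate $\bigl(\sum_{\xi\notin S,\,|\xi|\lesssim|\lambda|^{1/m}}\xi^\gamma\hat f(\xi)e^{i\xi\cdot x}/(P(\xi)-\lambda)\bigr)$ directly in physical space. The idea is to use that $f$ vanishes near $x$, so the kernel is evaluated away from the diagonal. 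One would then apply a dyadic decomposition in $|P(\xi)-\lambda|$ and integrate by parts in $\xi$ to gain decay from the separation $|y-x|\ge r/2$.

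The accounting in that decomposition is where the exponent $\delta=\tfrac12$ should be used. Each dyadic shell $\{2^j|\xi|^{m-1+\delta}<|P-\lambda|\le 2^{j+1}|\xi|^{m-1+\delta}\}$ contributes $\ell^2$-mass controlled by $\|f\|_{\ell^2}$ divided by $2^j|\lambda|^{(m-1+\delta)/m}$. The gain $|\lambda|^{-\delta/m}$ over the trivial bound $|\lambda|^{-(m-1)/m}$, together with the smoothness of $f$ and the off-diagonal support, should absorb the counting loss. If this cannot be made to work, the fallback is to accept a worse power of $|\lambda|$ from this region, in which case the statement as written would not be proved.
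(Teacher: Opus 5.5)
The paper gives no proof of this statement; it quotes it from \cite{FU24}. Your argument therefore has to stand on its own, and it does not. Your localization and the Cauchy--Schwarz bound on the resonant set $S$ are fine. The non-resonant part, however, is never estimated, and the remedy you sketch cannot close it.

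On $\{\xi\notin S,\ |\xi|\lesssim|\lambda|^{1/m}\}$ you use only $|P(\xi)-\lambda|>|\xi|^{m-1+\delta}$ and $\|f\|_{L^2}\lesssim|\lambda|^{(m-1)/m}\|\psi\|_{L^2(B(x,r))}$. This bounds $\hat u$ in $\ell^2$ by $|\lambda|^{-\delta/m}\|\psi\|$. Passing to the $\ell^1$ sum that computes $D^\gamma\psi(x)$ then costs the square root of the $\asymp|\lambda|^{d/m}$ lattice points, a net loss of $|\lambda|^{(d-1)/(2m)}$. Your proposed remedy does not help:
\begin{itemize}
\item Dyadic shells in $|P-\lambda|$ still produce only one factor $|\lambda|^{-\delta/m}$, which cannot offset $|\lambda|^{d/(2m)}$ once $d\ge2$.
\item Integrating by parts in $\xi$ is impossible against the sharp indicator of $\Z^d\setminus S$, or of sharp shells, because its finite differences gain nothing.
\item The high-frequency piece has the same defect. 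Trading $\hat f(\xi)$ for $|\xi|^{-k}\widehat{\partial^k f}(\xi)$ gains nothing at $|\xi|\approx C_0|\lambda|^{1/m}$, since $\|\partial^k f\|\approx|\lambda|^{k/m}\|f\|$. Cauchy--Schwarz there leaves a loss of $|\lambda|^{(d-2)/(2m)}$ for $d\ge3$.
\end{itemize}
A minor point: your Sobolev bounds on $f$ also require $\operatorname{supp}\chi\subset B(x,3r/4)$.

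The missing idea is that non-resonance is stable at scale $|\xi|^\delta$, which lets you gain $|\xi|^{-\delta}$ as many times as you like, not once. Since $|\nabla P|\lesssim\langle\xi\rangle^{m-1}$, the conditions $|P(\xi)-\lambda|>|\xi|^{m-1+\delta}$ and $|\eta-\xi|\le c|\xi|^\delta$ give $|P(\eta)-\lambda|\ge\frac12|\xi|^{m-1+\delta}$. Take nested cutoffs $\chi=\chi_0\prec\chi_1\prec\dots\prec\chi_k$ in $B(x,r)$, with $\chi_{j+1}\equiv1$ near $\operatorname{supp}\chi_j$. Then $(H-\lambda)(\chi_j\psi)=[H,\chi_j](\chi_{j+1}\psi)$ reads
$$
(P(\xi)-\lambda)\,\widehat{\chi_j\psi}(\xi)=\sum_{\eta\in\Z^d}\bigl(P(\xi)-P(\eta)\bigr)\,\hat\chi_j(\xi-\eta)\,\widehat{\chi_{j+1}\psi}(\eta).
$$
For $\xi\notin S$, split the sum into two parts.
\begin{itemize}
\item The near part $|\eta-\xi|\le c|\xi|^\delta$ gives a factor $\lesssim|\xi|^{-\delta}$ times values at points that are again non-resonant, with threshold $\tfrac12$.
\item The far part is $O(|\xi|^{-M})\|\psi\|_{L^2(B(x,r))}$, by the rapid decay of $\hat\chi_j$ and the bound $|\widehat{\chi_{j+1}\psi}|\le C_r\|\psi\|_{L^2(B(x,r))}$.
\end{itemize}
After $k$ steps, $|\hat u(\xi)|\le C_k\langle\xi\rangle^{-k\delta}\|\psi\|_{L^2(B(x,r))}$ for every $\xi\notin S$, uniformly in $\lambda$. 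Choosing $k\delta>d+|\gamma|$ makes the whole non-resonant contribution $O(\|\psi\|_{L^2(B(x,r))})$. Your Cauchy--Schwarz bound on $S$ then supplies the term $|\lambda|^{|\gamma|/m}\mathcal N_\lambda^{1/2}$.

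This route respects the sharp threshold in $\mathcal N_\lambda$, because each $\xi$ is treated individually. It also needs none of your $\lambda$-explicit interior estimates.

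Your kernel and summation-by-parts idea is the dual of this, and it too can be made to work. First replace $\mathbf 1_S$ by a smooth cutoff equal to $1$ on $\{|P-\lambda|\le\frac12|\xi|^{m-1+\delta}\}$ and supported in $S$. On its complement, $\xi^\gamma/(P-\lambda)$ is then a symbol at scale $|\xi|^\delta$. About $d/\delta$ summations by parts against $e^{i\xi\cdot(x-y)}$, with $|x-y|\ge r/2$, then give the required kernel bound. In either form, $\delta>0$ must be exploited roughly $d/\delta$ times.
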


\begin{lem}[Uniform boundedness of $\mathcal N_\lambda$ on compact sets]\label{lem:Nlambda-compact}
Let $K\subset\C$ be compact. Then
$$
\sup_{\lambda\in K}\mathcal N_\lambda\ <\ \infty.
$$
\end{lem}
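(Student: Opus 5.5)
The plan is to show that, for $\lambda$ in a compact set $K$, every lattice point counted by $\mathcal N_\lambda$ lies in a fixed ball $\{|\xi|\le R_K\}$. The count is then bounded by $\#(\Z^d\cap B(0,R_K))$, which is finite and independent of $\lambda$.

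Let $M:=\sup_{\lambda\in K}|\lambda|<\infty$. The symbol $P$ in Theorem~\ref{thm:FU24} may have lower-order terms, so we write $P=P_m+Q$, with $P_m$ the homogeneous principal part and $\deg Q\le m-1$. Ellipticity \eqref{eq:ellipticity} applied to $P_m$ gives $|P_m(\xi)|\ge c_{\rm ell}|\xi|^m$. Bounding the coefficients of $Q$, there is $C_Q$ with $|Q(\xi)|\le C_Q(1+|\xi|)^{m-1}$. (In the setting of Theorem~\ref{thm:main} we have $Q=0$.)

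Now take $\xi\in\Z^d$ with $|P(\xi)-\lambda|\le|\xi|^{m-1+\delta}$ and $\lambda\in K$. By the triangle inequality,
\[
c_{\rm ell}|\xi|^m\le |P_m(\xi)|\le |P(\xi)-\lambda|+|Q(\xi)|+|\lambda|\le |\xi|^{m-1+\delta}+C_Q(1+|\xi|)^{m-1}+M .
\]
Since $\delta=\tfrac12<1$, every term on the right grows strictly slower than $|\xi|^m$. Hence there is $R_K$, depending only on $c_{\rm ell}$, $C_Q$, $M$, $m$ and $\delta$, such that the inequality fails whenever $|\xi|>R_K$. For example, for $|\xi|\ge1$ the right side is at most $(1+2^{m-1}C_Q+M)|\xi|^{m-1/2}$, so $R_K=\bigl((1+2^{m-1}C_Q+M)/c_{\rm ell}\bigr)^{2}$ together with $|\xi|\ge 1$ works.

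It follows that $\mathcal N_\lambda\le\#\{\xi\in\Z^d:|\xi|\le\max(R_K,1)\}$ for every $\lambda\in K$. This bound is finite and uniform, which proves the lemma.

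There is no serious obstacle. The only points needing care are using $\delta<1$, which makes the tolerance $|\xi|^{m-1+\delta}$ lower order than $|\xi|^m$, and handling the lower-order terms of $P$ uniformly.
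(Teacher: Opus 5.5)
Your proof is correct and follows essentially the same route as the paper: you use ellipticity of the principal part, bound the lower-order terms, and use $\delta<1$ to confine every counted lattice point to a fixed ball independent of $\lambda\in K$. The only difference is cosmetic. You absorb the lower-order terms into a single triangle inequality and give an explicit radius, whereas the paper first establishes $|P(\xi)|\ge \frac{c_0}{2}|\xi|^m$ for large $|\xi|$.
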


\begin{proof}
Write $P(\xi) = P_m(\xi)+P_{m-1}(\xi)$, where $P_m$ is the homogeneous principal part and $P_{m-1}$ has degree at most $m-1$.
By ellipticity of $P_m$ there exists $c_0>0$ such that $|P_m(\xi)|\ge c_0|\xi|^m$ for all $\xi\in\R^d$.
Since $P_{m-1}(\xi) = O(|\xi|^{m-1})$, there exists $R_0\ge 1$ such that
$$
|P(\xi)|\ge \frac{c_0}{2}|\xi|^m\quad \text{for all }|\xi|\ge R_0.
$$
Let $M := \sup_{\lambda\in K}|\lambda|<\infty$.
If $|\xi|\ge R_0$ and $|P(\xi)-\lambda|\le|\xi|^{m-1+\delta}$ for some $\lambda\in K$, then
$$
\frac{c_0}{2}|\xi|^m\ \le\ |P(\xi)|
\ \le\ |\lambda|+|P(\xi)-\lambda|
\ \le\ M+|\xi|^{m-1+\delta}.
$$
Since $\delta = \tfrac12<1$, the right-hand side grows strictly slower than $|\xi|^m$ as $|\xi|\to\infty$, hence this inequality can hold only for $|\xi|\le R_1$ for some $R_1 = R_1(K,H)$.
Therefore, for every $\lambda\in K$, $\mathcal N_\lambda$ counts lattice points in the finite set $\{\xi\in\Z^d:\ |\xi|\le R_1\}$, so $\mathcal N_\lambda$ is uniformly bounded on $K$.
\end{proof}

\subsection{A uniform $C^1$ bound on $B(0,3/4)$}

\begin{prp}[Uniform local Lipschitz bound]\label{prp:uniform-lip}
Let $H$ be as in Theorem~\ref{thm:main}.
Then there exists a constant $L_{d,H}>0$ such that for every $\mu\in\C$ with $1\le|\mu|\le 2$
and every solution $u\in L^2(B(0,1))$ of
$$
Hu = \mu u\quad\text{in }B(0,1),
$$
we have
$$
\sup_{x\in B(0,3/4)}|\nabla u(x)|\ \le\ L_{d,H} \|u\|_{L^2(B(0,1))}.
$$
In particular, $u$ is Lipschitz on $B(0,3/4)$ with Lipschitz constant at most
$L_{d,H}\|u\|_{L^2(B(0,1))}$.
\end{prp}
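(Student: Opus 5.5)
We will apply Theorem~\ref{thm:FU24} with $\Omega = B(0,1)$, $\lambda=\mu$, and $H$ the given homogeneous operator, which is a special case of the operators allowed there. We will use it pointwise at every $x\in B(0,3/4)$ with a fixed radius $r=1/4$. For such $x$ we have $B(x,1/4)\subset B(0,1)$, and $r\in(0,1)$ as required. Since $1\le|\mu|$, the hypothesis $|\lambda|\ge1$ holds. Taking $\gamma=e_j$ for $j=1,\dots,d$ then gives
$$
|\partial_{x_j}u(x)| = |D^{e_j}u(x)| \le C_{d,1/4,e_j,H}\,|\mu|^{1/m}\bigl(1+\mathcal N_\mu^{1/2}\bigr)\,\|u\|_{L^2(B(x,1/4))}.
$$
Note that $|D^{e_j}u| = |\partial_{x_j}u|$, since $D^{e_j} = i\partial_{x_j}$.

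Next we make the right-hand side uniform in $\mu$. The factor $|\mu|^{1/m}$ is at most $2^{1/m}$. For the counting function we invoke Lemma~\ref{lem:Nlambda-compact} with the compact set $K=\{\mu\in\C:1\le|\mu|\le2\}$, which gives $N_K:=\sup_{\mu\in K}\mathcal N_\mu<\infty$. Finally, $\|u\|_{L^2(B(x,1/4))}\le\|u\|_{L^2(B(0,1))}$. Summing over $j$, or using $|\nabla u|\le\sum_j|\partial_j u|$, we get
$$
L_{d,H}:=2^{1/m}\bigl(1+N_K^{1/2}\bigr)\sum_{j=1}^d C_{d,1/4,e_j,H}.
$$
This constant depends only on $d$ and $H$: the value $r=1/4$ is fixed, $\delta=1/2$ is fixed by \eqref{eq:delta-fixed}, and $K$ is fixed.

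For the Lipschitz statement, $B(0,3/4)$ is convex and $u$ is smooth there by interior elliptic regularity. The mean value inequality along segments therefore gives $|u(x)-u(y)|\le\sup_{B(0,3/4)}|\nabla u|\,|x-y|$. This holds for complex-valued $u$ too, since we integrate $\nabla u$ along the segment, with $|\nabla u|$ the Euclidean norm of the complex gradient.

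The main point requiring care is that the constant in Theorem~\ref{thm:FU24} does not depend on $x$ or on $\mu$, only on $d$, $r$, $\gamma$ and $H$. This lets us fix $r=1/4$ uniformly in $x$. The only $\mu$-dependence then sits in $|\mu|^{1/m}$ and in $\mathcal N_\mu$, and Lemma~\ref{lem:Nlambda-compact} controls the latter. A further check is that $u$ only solves the equation in $B(0,1)$. This is harmless, because we take $\Omega=B(0,1)$ directly in Theorem~\ref{thm:FU24}, which requires the equation only on $\Omega$.
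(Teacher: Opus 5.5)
Your proposal is correct and follows the paper's proof essentially verbatim. You apply Theorem~\ref{thm:FU24} at each $x\in B(0,3/4)$ with $r=1/4$ and first-order $\gamma$, bound $|\mu|^{1/m}\le 2^{1/m}$, control $\mathcal N_\mu$ uniformly via Lemma~\ref{lem:Nlambda-compact} on $K=\{1\le|\mu|\le 2\}$, and use $\|u\|_{L^2(B(x,1/4))}\le\|u\|_{L^2(B(0,1))}$. Your extra details just spell out what the paper leaves as ``absorbing constants'' and ``in particular'': the explicit constant obtained by summing over $e_j$, and the mean value inequality on the convex ball $B(0,3/4)$ for the Lipschitz consequence.
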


\begin{proof}
Fix $x\in B(0,3/4)$. Then $B(x,1/4)\subset B(0,1)$.
Apply Theorem~\ref{thm:FU24} with $r = 1/4$, $\lambda = \mu$, and with $\gamma$ any first-order multi-index.
Since $1\le|\mu|\le 2$, we obtain
$$
|\nabla u(x)|\ \le\ C_{d,1/4,1,H} |\mu|^{1/m}\bigl(1+\mathcal N_\mu^{1/2}\bigr) \|u\|_{L^2(B(x,1/4))}.
$$
Because $\|u\|_{L^2(B(x,1/4))}\le\|u\|_{L^2(B(0,1))}$ and $|\mu|^{1/m}\le 2^{1/m}$, it remains to bound $\mathcal N_\mu$ uniformly for $1\le|\mu|\le 2$.
This follows from Lemma~\ref{lem:Nlambda-compact} with $K = \{\mu\in\C:\ 1\le|\mu|\le 2\}$.
Absorbing constants gives the claim.
\end{proof}

\section{A bounded overlap cover and a mass ratio lemma}

\begin{lem}[Bounded overlap cover]\label{lem:cover}
Let $E\subset\R^d$ and let $r>0$.
There exists a (finite or countable) set $\{x_j\}_{j\in J}\subset E$ with the following properties:
\begin{enumerate}
\item $E\subset \bigcup_{j\in J} B(x_j,r/2)$.
\item The family of dilated balls $\{B(x_j,r)\}_{j\in J}$ has bounded overlap: there exists $N_d\in\N$ depending only on $d$ such that each point of $\R^d$ belongs to at most $N_d$ balls $B(x_j,r)$.
\end{enumerate}
\end{lem}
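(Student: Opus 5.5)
The plan is to take a maximal $r/2$-separated subset of $E$ (a Vitali/Zorn-type construction) and to use a volume packing argument for the overlap bound. First, if $E=\emptyset$ there is nothing to prove. Otherwise, we call a set $S\subset E$ \emph{$r/2$-separated} if $|x-y|\ge r/2$ for all distinct $x,y\in S$. We order the family of all such sets by inclusion. This family is nonempty, since a single point works. A chain has its union as an upper bound, because separation is a pairwise condition. By Zorn's lemma there is therefore a maximal separated set $\{x_j\}_{j\in J}$.

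Countability follows because the balls $B(x_j,r/4)$ are pairwise disjoint. Each of them contains a point with rational coordinates, so $J$ is at most countable. (Alternatively, one can avoid Zorn by enumerating a countable dense subset of $E$ and running a greedy selection. The Zorn argument is cleaner, so we use it.)

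For property (1), take $x\in E$. By maximality, either $x=x_j$ for some $j$, or adding $x$ would destroy separation, so $|x-x_j|<r/2$ for some $j$. In both cases $x\in B(x_j,r/2)$.

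For property (2), fix $y\in\R^d$ and let $J_y=\{j: y\in B(x_j,r)\}$. For $j\in J_y$ the balls $B(x_j,r/4)$ are pairwise disjoint. Each is contained in $B(y,r+r/4)=B(y,5r/4)$. Comparing volumes gives
$$\#J_y\cdot \vol(B(0,r/4))\le \vol(B(0,5r/4)),$$
hence $\#J_y\le 5^d=:N_d$. This bound is independent of $r$ and of $E$.

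The only point needing care is that the overlap constant depends only on $d$; this comes directly from the volume ratio. Otherwise the argument is routine, and I expect no real obstacle.
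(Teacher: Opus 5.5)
Your proposal is correct and follows essentially the same route as the paper: a maximal family with pairwise disjoint $B(x_j,r/4)$ (equivalent to your $r/2$-separation for open balls), with maximality giving the $r/2$-cover and volume packing in $B(y,5r/4)$ giving $N_d=5^d$. Your Zorn's lemma and countability remarks supply details that the paper leaves implicit.
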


\begin{proof}
Choose $\{x_j\}_{j\in J}\subset E$ maximal with respect to the property that the balls $B(x_j,r/4)$ are pairwise disjoint.
Maximality implies $E\subset\bigcup_{j\in J}B(x_j,r/2)$: otherwise one could add a point of $E$ not covered and still preserve disjointness of the $r/4$--balls.

For the overlap bound, fix $x\in\R^d$ and consider the set $J(x) := \{j\in J:\ x\in B(x_j,r)\}$.
Then $x_j\in B(x,r)$ for all $j\in J(x)$, so the disjoint balls $B(x_j,r/4)$, $j\in J(x)$, are all contained in $B(x,5r/4)$.
Comparing volumes yields
$$
\#J(x) \vol(B(0,r/4))\ \le\ \vol(B(0,5r/4)),
$$
hence $\#J(x)\le 5^d$.
Thus one may take $N_d := 5^d$.
\end{proof}

\begin{lem}[Existence of a good ball]\label{lem:good-ball}
Let $r>0$ and set $E := \Omega_{-r}$.
Let $f\in L^1(\Omega)$ satisfy $f\ge 0$ and $\int_\Omega f>0$.
Define $M := \int_E f$.
Let $\{x_j\}_{j\in J}\subset E$ be as in Lemma~\ref{lem:cover}, so that $E\subset \bigcup_{j} B(x_j,r/2)$ and the balls $B(x_j,r)$ have overlap bounded by $N_d$.
Then there exists $j_0\in J$ such that
\begin{equation}\label{eq:ratio}
\frac{\int_{B(x_{j_0},r/2)} f}{\int_{B(x_{j_0},r)} f}
\ \ge\
\frac{M}{2N_d \int_\Omega f}.
\end{equation}
\end{lem}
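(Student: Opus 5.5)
Argue by contradiction via a summation over the cover. First I dispose of the degenerate case $M=0$: the right-hand side of \eqref{eq:ratio} is then zero, and any $j_0$ works (with the convention that the ratio is nonnegative, or choosing $j_0$ with $\int_{B(x_{j_0},r)}f>0$ if one wants the quotient defined). So assume $M>0$, which also forces $J\neq\emptyset$. Write $a_j := \int_{B(x_j,r/2)} f$ and $b_j := \int_{B(x_j,r)} f$, and note $b_j\ge a_j\ge 0$. Since $x_j\in E=\Omega_{-r}$, each ball $B(x_j,r)$ lies in $\Omega$, so all these integrals are integrals of $f$ over subsets of $\Omega$.

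The two key estimates come from the two properties in Lemma~\ref{lem:cover}. From the covering property $E\subset\bigcup_j B(x_j,r/2)$ and $f\ge0$, I get
$$
M=\int_E f\le \sum_{j\in J} a_j .
$$
From the bounded overlap property, $\sum_j \mathbf 1_{B(x_j,r)}\le N_d\,\mathbf 1_\Omega$ pointwise, because each $B(x_j,r)\subset\Omega$. Integrating against $f$ (monotone convergence if $J$ is countable) gives
$$
\sum_{j\in J} b_j\le N_d\int_\Omega f .
$$

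Now suppose, for contradiction, that $a_j < \theta\, b_j$ for every $j$ with $b_j>0$, where $\theta := M/(2N_d\int_\Omega f)$. When $b_j=0$ we also have $a_j=0$, so $a_j\le\theta b_j$ holds for all $j$. Summing,
$$
M\le\sum_j a_j\le \theta\sum_j b_j\le \theta N_d\int_\Omega f = \frac M2 ,
$$
which contradicts $M>0$. Hence some $j_0$ with $b_{j_0}>0$ satisfies $a_{j_0}\ge\theta b_{j_0}$, and this is exactly \eqref{eq:ratio}.

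The only delicate point is this last sum. It needs strict inequality for at least one term so that the contradiction is genuine. That holds because $M>0$ forces some $a_j>0$, hence some $b_j>0$, and for that $j$ the assumed inequality is strict. The factor $2$ in the statement gives room to spare, so even a non-strict version of the argument would suffice. Beyond that, the main care needed is the measure-theoretic handling of countable sums, which monotone convergence covers.
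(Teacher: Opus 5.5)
Your proposal is correct and follows the paper's proof essentially verbatim. The covering gives $M\le\sum_j\int_{B(x_j,r/2)}f$, bounded overlap (using $B(x_j,r)\subset\Omega$) gives $\sum_j\int_{B(x_j,r)}f\le N_d\int_\Omega f$, and assuming the ratio bound fails for every ball yields $M\le M/2$. You also handle the balls with $\int_{B(x_j,r)}f=0$ explicitly, which guarantees that the selected $j_0$ has a positive denominator; the paper's later rescaling step tacitly needs this. As you note yourself, no strict inequality is needed, since $M\le M/2$ already contradicts $M>0$.
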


\begin{proof}
If $M = 0$ the claim is vacuous, so assume $M>0$.
Since $E\subset\bigcup_j B(x_j,r/2)$ we have
$$
M\le \sum_{j\in J}\int_{B(x_j,r/2)} f.
$$
Argue by contradiction and assume that for every $j\in J$,
$$
\int_{B(x_j,r/2)} f\ <\ \alpha \int_{B(x_j,r)} f,
\quad\text{where }\alpha := \frac{M}{2N_d \int_\Omega f}.
$$
Then
$$
M
< \alpha \sum_{j\in J}\int_{B(x_j,r)} f.
$$
By bounded overlap,
$$
\sum_{j\in J}\int_{B(x_j,r)} f\ \le\ N_d\int_\Omega f.
$$
Hence $M<\alpha N_d\int_\Omega f = M/2$, a contradiction. Therefore \eqref{eq:ratio} holds for some $j_0$.
\end{proof}

\section{Proofs of the main results}

\begin{proof}[Proof of Theorem~\ref{thm:main}]
Let $\psi_\lambda\in L^2(\Omega)$ be a nonzero solution of \eqref{eq:eigen}.
Set $r := r_\lambda = |\lambda|^{-1/m}$ and define
$$
M := \|\psi_\lambda\|_{L^2(\Omega_{-r})}^2,\quad
N := \|\psi_\lambda\|_{L^2(\Omega)}^2>0.
$$
If $M = 0$ then \eqref{eq:main} is trivial. Assume $M>0$.

\textbf{Step 1: choose a good ball at scale $r$.}
Apply Lemma~\ref{lem:good-ball} with $f = |\psi_\lambda|^2$, $E = \Omega_{-r}$.
Since $\int_\Omega f = N$, there exists $x_0\in\Omega_{-r}$ such that
\begin{equation}\label{eq:good-ratio}
\frac{\|\psi_\lambda\|_{L^2(B(x_0,r/2))}^2}{\|\psi_\lambda\|_{L^2(B(x_0,r))}^2}
\ \ge\ \frac{M}{2N_d N}.
\end{equation}
Because $x_0\in\Omega_{-r}$, we have $B(x_0,r)\subset\Omega$.

\textbf{Step 2: rescale to unit scale.}
Define the rescaled and normalized function $u:B(0,1)\to\C$ by
\begin{equation}\label{eq:rescale}
u(y) := \frac{r^{d/2} \psi_\lambda(x_0+r y)}{\|\psi_\lambda\|_{L^2(B(x_0,r))}}.
\end{equation}
Then $\|u\|_{L^2(B(0,1))} = 1$, and similarly
\begin{equation}\label{eq:u-half}
\|u\|_{L^2(B(0,1/2))}^2
 = \frac{\|\psi_\lambda\|_{L^2(B(x_0,r/2))}^2}{\|\psi_\lambda\|_{L^2(B(x_0,r))}^2}
\ \ge\ \frac{M}{2N_d N}.
\end{equation}

Since $H$ is homogeneous of degree $m$, the rescaling \eqref{eq:rescale} converts \eqref{eq:eigen} into
\begin{equation}\label{eq:scaled-eqn}
Hu = \mu u\quad\text{in }B(0,1),
\quad \mu := r^m\lambda = \frac{\lambda}{|\lambda|}.
\end{equation}
In particular, $|\mu| = 1$.

\textbf{Step 3: find a point where $|u|$ is large.}
By Lemma~\ref{lem:L2-to-Linfty} with $R = 1/2$ and \eqref{eq:u-half}, there exists $y_0\in B(0,1/2)$ such that
\begin{equation}\label{eq:amp-lower}
|u(y_0)|\ \ge\ \frac{\|u\|_{L^2(B(0,1/2))}}{\vol(B(0,1/2))^{1/2}}
\ \ge\ \frac{1}{\vol(B(0,1/2))^{1/2}}\Bigl(\frac{M}{2N_d N}\Bigr)^{1/2}.
\end{equation}

\textbf{Step 4: use a uniform Lipschitz bound to inscribe a nonvanishing ball.}
Since $|\mu| = 1$, Proposition~\ref{prp:uniform-lip} (with $\|u\|_{L^2(B(0,1))} = 1$) yields
$$
|u(x)-u(y)|\le L_{d,H}|x-y|\quad\text{for all }x,y\in B(0,3/4).
$$
Apply Lemma~\ref{lem:lipschitz-ball} to the open set $\Omega' = B(0,3/4)$, the function $G = u|_{\Omega'}$, and the point $x_0 = y_0\in B(0,1/2)\subset\Omega'$.
Since $\dist(y_0,(\Omega')^c) = \frac34-|y_0|\ge \frac14$, Lemma~\ref{lem:lipschitz-ball} gives a ball
$$
B(y_0,\rho_0)\subset\{u\neq 0\},
\quad
\rho_0 := \min\Bigl\{\frac{|u(y_0)|}{2L_{d,H}},\ \frac14\Bigr\}.
$$
Scaling back via $x = x_0+r y$ yields
$$
B(x_0+r y_0,\ r\rho_0)\subset\{\psi_\lambda\neq 0\} = \Sigma_\lambda,
$$
so $\inrad(\Sigma_\lambda)\ge r\rho_0$.

Since $\sqrt{M/N}\le 1$, the elementary inequality $\min\{a t,b\}\ge \min\{a,b\} t$ for $t\in[0,1]$
together with \eqref{eq:amp-lower} yields
$$
\rho_0
\ge
\min\Bigl\{
\frac{1}{2L_{d,H} \vol(B(0,1/2))^{1/2}}\Bigl(\frac{1}{2N_d}\Bigr)^{1/2},
\ \frac14
\Bigr\} \Bigl(\frac{M}{N}\Bigr)^{1/2}.
$$
Therefore
$$
\inrad(\Sigma_\lambda)
\ge
c_{d,H} r \Bigl(\frac{M}{N}\Bigr)^{1/2}
 = 
c_{d,H} r_\lambda 
\frac{\|\psi_\lambda\|_{L^2(\Omega_{-r_\lambda})}}{\|\psi_\lambda\|_{L^2(\Omega)}},
$$
where
$$
c_{d,H} := 
\min\Bigl\{
\frac{1}{2L_{d,H} \vol(B(0,1/2))^{1/2}}\Bigl(\frac{1}{2N_d}\Bigr)^{1/2},
\ \frac14
\Bigr\}>0.
$$
This proves \eqref{eq:main}.
\end{proof}

\begin{proof}[Proof of Theorem~\ref{thm:local}]
Apply Theorem~\ref{thm:main} to the open set $A$ and to the function $\psi_\lambda|_A$.
Since \eqref{eq:eigen} holds in $\Omega$ in the distributional sense, it holds in $A$ as well.
The nonvanishing set in $A$ is $\Sigma_\lambda\cap A$, and the $r$--interior of $A$ is $A_{-r}$.
Thus Theorem~\ref{thm:main} yields \eqref{eq:local}.
\end{proof}

\begin{proof}[Proof of Corollary~\ref{cor:boundary-layer}]
With $\|\psi_{\lambda_j}\|_{L^2(\Omega)} = 1$, Theorem~\ref{thm:main} gives
$$
\|\psi_{\lambda_j}\|_{L^2(\Omega_{-r_{\lambda_j}})}
\ \le\ \frac{\inrad(\Sigma_{\lambda_j})}{c_{d,H} r_{\lambda_j}}.
$$
If $\inrad(\Sigma_{\lambda_j}) = o(r_{\lambda_j})$, the right-hand side tends to $0$.
\end{proof}

\end{document}